\definecolor{Gray}{gray}{0.9}
\providecommand{\U}[1]{\protect\rule{.1in}{.1in}}
\newtheorem{theorem}{Theorem}[section]
\theoremstyle{plain}
\newtheorem{corollary}{Corollary}[section]
\newtheorem{lemma}{Lemma}[section]
\newtheorem{proposition}{Proposition}[section]
\numberwithin{equation}{section}
\theoremstyle{definition}
\theoremstyle{remark}
\begin{document}
\title[Interior Estimates]{
A Liouville type theorem for ancient Lagrangian Mean Curvature flows}
\author{Arunima Bhattacharya, Micah Warren, and Daniel Weser}

\address{Department of Mathematics, Phillips Hall\\
 the University of North Carolina at Chapel Hill, NC }
\email{arunimab@unc.edu}

\address{Department of Mathematics\\
	University of Oregon, Eugene, OR 97403}

\email{micahw@uoregon.edu}

\address{Department of Mathematics, Phillips Hall\\
 the University of North Carolina at Chapel Hill, NC }
 \email{weser@unc.edu}

\begin{abstract} 
We prove a Liouville type result for convex solutions of the Lagrangian mean curvature flow with restricted quadratic growth assumptions at antiquity on the solutions.
\end{abstract}
\maketitle

\section{Introduction}

A family of Lagrangian submanifolds $X(x,t):\mathbb R^n\times\mathbb R \to\mathbb  C^n$ is said to evolve by mean curvature if
\[
(X_t)^\bot=\Delta_gX=\vec H,
\]
where $\vec H $ denotes the mean curvature vector of the Lagrangian submanifold. After a change of coordinates, one can locally write $X(x,t)=(x,Du(x,t))$ such that $\Delta_gX=J\nabla_g\Theta$ (see \cite{HL}): Here $g=I_n+(D^2u)^2$ is the induced metric on $(x,Du(x))$, $J$ is the almost complex structure on $\mathbb C^n$,
and $\Theta$ is the Lagrangian angle given by
\begin{equation*}
    \Theta=\sum_{i=1}^n\arctan\lambda_i ,
\end{equation*}
where $\lambda_i$ are the eigenvalues of the Hessian $D^2u$. This
results in a local potential $u(x,t)$ evolving by the parabolic equation
\begin{align}
    u_t&=\sum_{i=1}^n\arctan\lambda_i\label{ut}.
\end{align}
We are free to add any function that does not depend on $x$ to the right-hand side, since this does not change the flow of the gradient graph of $u$. Thus, we will consider the equation 
\begin{equation}
    u_t =\sum_{i=1}^n\arctan\lambda_i - \Theta_0 \label{ut2}
\end{equation}
for a fixed $\Theta_0$. In particular, under this flow, solutions to the special Lagrangian equation
\begin{equation} 
    \sum_{i=1}^n\arctan\lambda_i  = \Theta_0
\end{equation} 
will be stationary.

Our main result in this paper is the following:
\begin{theorem}
Let $u$ be an entire ancient solution to \eqref{ut2}.
Suppose that $u$ is convex and satisfies the following growth condition at antiquity: there exists an $R_{0}$ such that
\[
\lim\sup_{t\rightarrow-\infty}\sup_{x\in\mathbb{R}^{n}}\frac{\left\vert
u(x,t)\right\vert }{\left\vert x\right\vert ^{2}+R_{0}}<\frac{1}{(6\sqrt{n}+2)^2}.
\]
Then $u$ is a quadratic polynomial \  
\end{theorem}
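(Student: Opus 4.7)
The plan is to upgrade the antiquity growth assumption into a uniform bound on $D^2 u$ on the whole space-time $\mathbb R^n\times\mathbb R$, and then to close the argument by a parabolic Liouville theorem applied to each second derivative $D_{ee}u$. Convexity is the pivotal input: it forces all eigenvalues $\lambda_i$ of $D^2u$ to be non-negative, which is exactly the regime in which $F(A)=\sum_i\arctan\lambda_i(A)$ is concave as a function of the symmetric matrix $A$. Concavity in turn unlocks both Evans--Krylov-type regularity and a one-sided maximum principle for $\lambda_{\max}(D^2u)$.

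\textit{Step 1 (growth condition $\Rightarrow$ Hessian bound at antiquity).} Pick a sequence $t_k\to-\infty$ and let $R\to\infty$. The hypothesis gives some $c<(6\sqrt n+2)^{-2}$ with $|u(x,t_k)|\leq c(R^2+R_0)$ on $B_R$ for $k$ large. I would combine this with an interior Hessian estimate for convex solutions to the Lagrangian mean curvature flow, in the spirit of Warren--Yuan and of the first author, of the shape
\[
\sup_{B_{R/2}}|D^2u(\cdot,t_k)|\;\leq\; C(n)\,\frac{\mathrm{osc}_{B_R\times[t_k-R^2,t_k]}\,u}{R^2},
\]
and send $R\to\infty$ to obtain a global bound $\sup_x|D^2u(x,t_k)|\leq K(n)\,c$ at each $t_k$. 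The explicit threshold $(6\sqrt n+2)^{-2}$ signals that the underlying estimate must be sharp, with proportionality factor precisely $6\sqrt n+2$.

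\textit{Step 2 (propagation to all times).} Differentiating \eqref{ut2} twice in a unit direction $e$ and setting $w=D_{ee}u$ gives
\[
w_t \;=\; F^{ij}D_{ij}w \;+\; F^{ij,kl}(D^2u)\,D_{ij}(D_eu)\,D_{kl}(D_eu) \;\leq\; F^{ij}D_{ij}w,
\]
since $F$ is concave on $\{D^2u\geq 0\}$. The parabolic maximum principle then shows that $\sup_x D_{ee}u(\cdot,t)$ is non-increasing in $t$ for each $e$, and hence so is $\sup_x\lambda_{\max}(D^2u(\cdot,t))$. Combined with Step 1 this yields $\|D^2u\|_{L^\infty(\mathbb R^n\times\mathbb R)}\leq K(n)\,c$.

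\textit{Step 3 (parabolic Liouville).} With $D^2u$ globally bounded, \eqref{ut2} becomes uniformly parabolic with concave operator, so Evans--Krylov provides uniform interior $C^{2,\alpha}$ bounds. Each $w=D_{ee}u$ is then a bounded ancient solution of a linear uniformly parabolic equation with Hölder coefficients, so the parabolic Liouville theorem (via iterated Krylov--Safonov Harnack, or by applying the strong maximum principle to $\sup w - w$ once Step 2 gives monotonicity of $\sup_x w$) forces $w$ to be constant on $\mathbb R^n\times\mathbb R$. Hence $D^2u$ is a constant symmetric matrix, and reinserting into \eqref{ut2} shows $u_t$ is also constant, so $u$ is a quadratic polynomial.

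\textit{Main obstacle.} I expect Step 1 to be the real difficulty. The sharp numerical threshold $(6\sqrt n+2)^{-2}$ is not generic and strongly suggests that the proof rests on an explicit, convexity-adapted Hessian estimate for the arctangent-sum operator whose constant is exactly $6\sqrt n+2$. Producing such a quantitative estimate, and verifying that the hypothesis sits strictly below the borderline needed for the propagation in Step 2 to remain inside the concavity regime, is where the bulk of the analysis will live.
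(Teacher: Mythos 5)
Your high-level plan --- parlaying the growth condition at antiquity into a uniform Hessian bound on all of space-time, then closing by a rigidity result for convex ancient solutions with bounded Hessian --- is exactly the skeleton of the paper's proof (which concludes by citing Nguyen--Yuan rather than re-deriving Evans--Krylov and linear parabolic Liouville). The difficulty is in Step 1, and you correctly flag it as ``where the bulk of the analysis will live,'' but the estimate you posit there is in the wrong form.

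You write the interior Hessian estimate as a linear, scaling-invariant inequality
$\sup_{B_{R/2}}|D^2u| \leq C(n)\,\mathrm{osc}/R^2$ with a universal constant $C(n)$. Such an estimate, if true, would be far stronger than what the paper proves and would make the explicit smallness hypothesis in the theorem unnecessary. What the paper actually establishes (Sections 2--4) is a \emph{conditional} Hessian bound: a Jacobi inequality $Lb + 2|\nabla_g b|^2/b\leq 0$ for $b=(\det g)^{1/2n}$ under convexity, combined with a Korevaar-type cutoff argument, gives that if $|u|\leq 1$ on a fixed spatial ball of radius $6\sqrt n+1$ at the initial time, then $|D^2u(0,\tfrac1n)|\leq C(n)$. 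Under parabolic rescaling this becomes: if $\mathrm{osc}_{B_R}u/R^2$ lies \emph{below a fixed dimensional threshold}, then $|D^2u|\leq C(n)$; it does not give an output proportional to $\mathrm{osc}/R^2$. The number $6\sqrt n + 2$ is therefore not a proportionality factor in a linear estimate but the radius of the parabolic cylinder on which the maximum-principle argument is run, and $(6\sqrt n+2)^{-2}$ is chosen so that the rescaled solution $\tilde u_\lambda(x,t)=\lambda^{-2}u(\lambda(x-x_0),\lambda^2 t)$ satisfies the smallness hypothesis $|\tilde u_\lambda|\leq 1$ on $B_{6\sqrt n+1}\times\{-\tfrac1n\}$ once $\lambda\gg|x_0|,R_0$. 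The paper explicitly notes that a pointwise Hessian estimate without a smallness constraint remains open, so the estimate you want to cite cannot be taken as a black box here.

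Your Step 2 (propagation forward in time via concavity of $\sum\arctan\lambda_i$ on $\{D^2u\geq 0\}$ and a maximum principle for $w=D_{ee}u$) is a genuine idea, but it is not used in the paper and is also not needed: the paper's Hessian estimate is already a forward-in-time estimate on a parabolic cylinder, so rescaling it so that the terminal slice lands at $(x_0,t_0)$ and the initial slice lies at antiquity gives $|D^2u(x_0,t_0)|\leq C(n)$ directly at every point. Moreover, running a maximum principle for $w$ on all of $\mathbb R^n$ requires ruling out a supremum at spatial infinity, which presupposes a global bound on $w$ at intermediate times --- precisely what you would be trying to establish. In short: the architecture of your argument matches the paper, Step 3 is essentially the paper's citation of Nguyen--Yuan made explicit, but Step 1 has a real gap because the posited linear estimate is not available, and the correct conditional form of the estimate (and its delicate coupling with the rescaling) is the actual content of the paper.
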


In essence, this theorem states that if a convex ancient solution has the growth of a small quadratic polynomial for times far enough in the past, then the solution must in fact be a quadratic polynomial. Although the growth condition is somewhat restrictive, it applies to the scalar potential $u$, not to the gradient graph, and requires only convexity without additional constraints on the Hessian.

We derive a Hessian estimate for solutions of \eqref{ut} via a pointwise approach assuming a quadratic growth condition at antiquity. 
The method used here follows a parabolic adaptation of Korevaar's maximum principle method as in Evan-Spruck \cite{evans1992motion}.  A similar maximum principle estimate for the elliptic special Lagrangian equation was carried out in Warren-Yuan \cite{warren2008liouville}, inspired by Korevaar's \cite{K} estimate for minimal surface equations.   
As in \cite{warren2008liouville}, our method requires a smallness condition on the growth.
Once a Hessian bound is established,  H\"older regularity and the Liouville property follow from Nguyen-Yuan \cite{YYmcf}.

In 1968, Bombieri, De Giorgi, and Miranda \cite{bombieri1969}
 established an interior estimate bound for classical
solutions of the minimal surface equation in $\mathbb{R}^n$.
In the 1970s, Trudinger \cite{trudinger1972new} provided a new and simpler derivation of this
estimate and partly developed in the process some new
techniques applicable to the study of hypersurfaces in
general. In the 1980s, Korevaar \cite{K} found a
strikingly simple pointwise argument. Korevaar's technique was adapted to prove estimates for the parabolic version, namely mean curvature flow, by Ecker-Huisken \cite{EK}, Evans-Spruck \cite{evans1992motion}, Colding-Minicozzi \cite{colding2004sharp} in codimension one.

Mean curvature flow in higher codimensions including the Lagrangian mean curvature flow has been studied by many authors \cite{CCY,CCY13, SmW, wang2002long, tsai2023mean,wan2024sharp, lotay2024ancient}. However, in higher codimensions, an adaption of Korevaar's method with no restriction on the height remains elusive, as the diagonization step is difficult to overcome.  To diagonalize the differential of a vector-valued function by rotating the base, one requires that the gradients of each of the functions in the vector be mutually orthogonal, which fails in general.  Interior Hessian estimates for the elliptic special Lagrangian equation and the variable phase Lagrangian mean curvature equation have been established via an integral approach in \cite{BC,BCGJ, WY2d, WY9, CWY, WY,WaY, CSY, AB, AB2d, BMS, ZhouHess, BWall}, via a compactness approach in \cite{Lcomp}, via a doubling approach in \cite{shankar2024hessian}. The Liouville result of \cite{warren2008liouville} is generalized to remove the small quadratic constraint via a compactness method in \cite{QiDing}. A pointwise approach to proving interior Hessian estimates without any constraints on the growth remains an open question even for the elliptic special Lagrangian equation.

\medskip
\noindent\textbf{Acknowledgments.} 
AB acknowledges
the support of the Simons Foundation grant MP-TSM-00002933. DW acknowledges
the support of the NSF RTG DMS-2135998 grant.

\section{Jacobi Inequality}

Let $g$ denote the induced metric of the embedding $X$, which in the graphical case takes the form $g=I_n+D^2u(\cdot,t)D^2u(\cdot,t)$. 
If we take a derivative of (\ref{ut2}), we find 
\begin{equation*}
    u_{tk}=g^{ij}u_{ijk},
\end{equation*} 
so that, if we define the operator
\begin{equation*}
    L := \partial_t - g^{ij} \partial_{ij},
\end{equation*} 
we see that equivalently
\begin{equation}\label{Lu_k}
    Lu_{k}=0.
\end{equation}

We will let $V=\sqrt{|\det g|}$ denote the volume element, and we will define $b=V^{1/n}$ to be the volume element raised to the power of $1/n$. In the following lemma, we show that a Jacobi inequality holds along the ``heat flow'' of $b$ with respect to $L$, which will be a crucial ingredient in the proof of the main theorem.

\begin{lemma}
    For a convex solution of \eqref{ut}, the volume element $b=(\sqrt{|\det g|})^{1/n}$ satisfies 
    \begin{equation}\label{jacobi ineq}
        Lb + 2 \frac{|\nabla_g b|^2}{b} \leq0.
    \end{equation}
\end{lemma}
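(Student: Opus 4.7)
My plan is to reduce \eqref{jacobi ineq} to an inequality for $\omega := \log V = \tfrac{1}{2}\log\det g$. The chain rule for $L=\partial_t - g^{ij}\partial_{ij}$ yields $Lb/b = L\log b - |\nabla_g b|^2/b^2$, and $|\nabla_g\log b|^2 = |\nabla_g b|^2/b^2$, so since $b>0$ the inequality \eqref{jacobi ineq} is equivalent to $L\log b + |\nabla_g\log b|^2 \leq 0$; as $\log b = \omega/n$, this in turn reads
\[
L\omega + \tfrac{1}{n}|\nabla_g\omega|^2 \leq 0.
\]

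\textbf{Pointwise formula for $L\omega$.} At a fixed space--time point I would rotate the spatial coordinates so that $D^2u$ is diagonal with eigenvalues $\lambda_1,\dots,\lambda_n$, and exploit the complex identity $\omega + i\Theta = \log\det(I+iD^2u) = \sum_k\log(1+i\lambda_k)$. Differentiating this identity twice in $x$ and extracting the real part yields an explicit expression for $\omega_{jk}$, hence for $g^{jk}\omega_{jk}$; the imaginary part simultaneously yields $\Theta_{jk}$, which feeds into $\partial_t\omega = \sum_a \lambda_a\Theta_{aa}/(1+\lambda_a^2)$ through $\partial_t g_{aa} = 2\lambda_a u_{taa}$ together with $u_{taa}=\Theta_{aa}$. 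The fourth-derivative contributions $u_{aajj}$ appearing in $\partial_t\omega$ cancel exactly with those in $g^{jk}\omega_{jk}$ after swapping the dummy indices $a\leftrightarrow j$ and using $u_{aajj}=u_{jjaa}$, leaving the purely third-order expression
\[
L\omega = -\sum_{a,b,c}\frac{(1+\lambda_b\lambda_c)\,u_{abc}^2}{(1+\lambda_a^2)(1+\lambda_b^2)(1+\lambda_c^2)}.
\]
For convex $u$ every coefficient $1+\lambda_b\lambda_c$ is at least $1$, so already $-L\omega\geq 0$.

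\textbf{Controlling $|\nabla_g\omega|^2$.} Setting $T_{abc} := u_{abc}/\sqrt{(1+\lambda_a^2)(1+\lambda_b^2)(1+\lambda_c^2)}$, a short computation gives $\omega_c = \sqrt{1+\lambda_c^2}\,\sum_a \lambda_a T_{aac}$, so a plain Cauchy--Schwarz inequality yields
\[
|\nabla_g\omega|^2 = \sum_c\Bigl(\sum_a \lambda_a T_{aac}\Bigr)^2 \leq n\sum_{a,c}\lambda_a^2\,T_{aac}^2.
\]
Now I would collect the permutations of $(a,b,c)$ contributing to each $T_{abc}^2$ in the formula for $-L\omega$: the coefficient of $T_{aaa}^2$ is $1+\lambda_a^2$; the coefficient of $T_{aac}^2$ (for $a\neq c$) is $3+2\lambda_a\lambda_c+\lambda_a^2$; and all remaining coefficients are nonnegative for $\lambda\geq 0$. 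Both explicit coefficients dominate $\lambda_a^2$ under convexity, so $-L\omega \geq \sum_{a,c}\lambda_a^2 T_{aac}^2 \geq \tfrac{1}{n}|\nabla_g\omega|^2$, which is the desired inequality.

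\textbf{Main obstacle.} The main technical hurdle is the explicit evaluation of $L\omega$: one must track the fourth-derivative cancellation between $\partial_t\omega$ and $g^{jk}\omega_{jk}$ carefully, and then symmetrize the resulting bilinear form in $u_{abc}$ into the clean coefficient $1+\lambda_b\lambda_c$. Convexity is used only at the very end, to guarantee that the relevant coefficients of $T_{abc}^2$ in $-L\omega$ all dominate $\lambda_a^2$; the final Cauchy--Schwarz step itself is routine.
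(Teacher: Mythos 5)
Your proof is correct and is essentially the same argument as the paper's, with a cosmetic change of variable: you work with $\omega=\log V$ and the Lewy-type identity $\omega+i\Theta=\log\det(I+iD^2u)$ to organize the third-derivative expansion, whereas the paper works directly with $b=V^{1/n}$ and expands $Lb$ via $\mathrm{tr}(g^{-1}Lg)$ and $Lu_{kk}$. Your identity $L\omega=-\sum(1+\lambda_b\lambda_c)T_{abc}^2$ is precisely the paper's equation after dividing $Lb+|\nabla_g b|^2/b$ by $b/n$, and the closing Cauchy--Schwarz estimate $|\nabla_g\omega|^2\le n\sum_{a,c}\lambda_a^2T_{aac}^2$ together with the observation that, under convexity, the $-L\omega$ coefficient of each $T_{aac}^2$ dominates $\lambda_a^2$ (while the remaining coefficients are nonnegative) is exactly the paper's final comparison.
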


\medskip

\begin{proof}
    We directly compute
    \begin{align}
        \partial_t b &= \frac{1}{2n}  \mathrm{tr}(g^{-1}\partial_t g)  b \\
        \partial_i b &= \frac{1}{2n}  \mathrm{tr}(g^{-1}\partial_i g)  b \\
        \partial_{ij} b &= \frac{1}{2n} \mathrm{tr}(g^{-1}\partial_{ij} g)  b + \frac{1}{2n} \mathrm{tr}(\partial_jg^{-1}\partial_i g)  b + \frac{\partial_i b  \partial_j b}{b} .
    \end{align}
    Therefore, we find the initial expansion
    \begin{equation}
        Lb = \frac{1}{2n} \mathrm{tr}(g^{-1} L g)  b - \frac{1}{2n} g^{ij}\mathrm{tr}(\partial_jg^{-1}\partial_i g)  b - \frac{|\nabla_gb|^2}{b} .\label{Lb 1}
    \end{equation}
    Now we will compute $Lg$. To do so, we fix a point $p$ and rotate coordinates so that $D^2u$ is diagonalized at $p$. (Note; a rotation in the domain of a scalar function generates a transpose rotation of the gradient of that function. 
 The conjugation by these is what diagonalizes the Hessian.)  We denote by $\lambda_k$ the eigenvalues of $D^2u(p)$, so that $D^2u(p)=\textrm{diag}(\lambda_1,\hdots,\lambda_n)$. Note that  $\lambda_k\geq0$ by our convexity hypothesis. In these coordinates at the point $p$, the metric $g$ takes the form 
    \begin{equation}
        g_{kl} = \delta_{kl} + Du_k\cdot Du_l = (1+\lambda_k^2)\delta_{kl},
    \end{equation}
    and thus 
    \begin{equation}
        g^{kl} = (1+\lambda_k^2)^{-1}\delta^{kl}.
    \end{equation}
    
    Applying these identities at the point $p$, one obtains
    \begin{align}
        \partial_t g_{kl} 
            &= (\lambda_k+\lambda_l)  \partial_t u_{kl} \\
        \partial_{ij} g_{kl} &= (\lambda_k+\lambda_l)  u_{ijkl} + Du_{ik}\cdot Du_{jl} + Du_{jk}\cdot Du_{il} ,
    \end{align}
    so that
    \begin{align}
        Lg_{kl} 
            &= (\lambda_k+\lambda_l)  \partial_t u_{kl} - g^{ij}\Big((\lambda_k+\lambda_l)  u_{ijkl} + Du_{ik}\cdot Du_{jl} + Du_{jk}\cdot Du_{il}\Big) \\ 
            &= (\lambda_k+\lambda_l)  \partial_t u_{kl} - g^{ii}(\lambda_k+\lambda_l)  u_{iikl} + 2g^{ii}Du_{ik}\cdot Du_{il} .
    \end{align}
    Thus, at the point $p$ we find  
    \begin{align}
        \mathrm{tr}(g^{-1}Lg) 
            &= g^{kk}Lg_{kk} \\
            &= 2g^{kk}\lambda_k  \partial_t u_{kk} - 2g^{ii}g^{kk}\lambda_k u_{iikk} - 2g^{ii}g^{kk}|Du_{ik}|^2 \\
            &= 2g^{kk}\lambda_kL u_{kk} - 2g^{ii}g^{kk}|Du_{ik}|^2 \label{tr g Lg 1} .
    \end{align}
    Now we compute $Lu_{kk}$. Differentiating $Lu_k=0$, we obtain
    \begin{equation}
        Lu_{kk} = (\partial_k g^{ij})u_{ijk}.
    \end{equation}
    Differentiating the identity $\delta_a^c=g_{ab}g^{bc}$, one obtains at the point $p$ that
    \begin{equation}\label{pa_k g^ij}
        \partial_k g^{ij} = -g^{ii}g^{jj}(\lambda_i+\lambda_j)u_{ijk} ,
    \end{equation}
    so that 
    \begin{align}
        Lu_{kk} 
            &= (\partial_k g^{ij})u_{ijk} \\
            &= -g^{ii}g^{jj}(\lambda_i+\lambda_j)u_{ijk}^2 . \label{Lu_kk}
    \end{align}
    Combining \eqref{tr g Lg 1} and \eqref{Lu_kk}, we find at the point $p$ that
    \begin{align}
        \mathrm{tr}(g^{-1}Lg) 
            &= 2g^{kk}\lambda_kL u_{kk} - 2g^{ii}g^{kk}|Du_{ik}|^2 \\
            &= -2g^{ii}g^{jj}g^{kk}(\lambda_i+\lambda_j)\lambda_kL u_{ijk}^2 - 2g^{ii}g^{kk}|Du_{ik}|^2 \label{tr g^ij Lg_ij} .
    \end{align}

    Next, using \eqref{pa_k g^ij}, we directly compute
    \begin{align}
        g^{ij}\mathrm{tr}(\partial_jg^{-1}\partial_i g) 
            &= g^{ii} \partial_i g^{kl} \partial_i g_{kl} \\
            &= -g^{ii}g^{kk}g^{ll}(\lambda_k+\lambda_l)^2u_{ikl}^2, \label{g^ij tr pa_j}
    \end{align}
    so that, inserting \eqref{tr g^ij Lg_ij} and \eqref{g^ij tr pa_j} into \eqref{Lb 1}, we obtain
    \begin{align}
        Lb 
            &= \frac{1}{2n} \mathrm{tr}(g^{-1} L g)  b - \frac{1}{2n} g^{ij}\mathrm{tr}(\partial_jg^{-1}\partial_i g)  b - \frac{|\nabla_gb|^2}{b} \\
            &= \frac{b}{2n} \bigg(-2g^{ii}g^{jj}g^{kk}(\lambda_i+\lambda_j)\lambda_k u_{ijk}^2 - 2g^{ii}g^{kk}|Du_{ik}|^2 \\
            &\hspace{2cm} + g^{ii}g^{kk}g^{ll}(\lambda_k+\lambda_l)^2u_{ikl}^2\bigg) - \frac{|\nabla_gb|^2}{b} \\
            &= \frac{b}{2n} \bigg(-2g^{ii}g^{jj}g^{kk}(\lambda_i+\lambda_j)\lambda_k u_{ijk}^2 - 2g^{ii}\delta^{jj}g^{kk}u_{ijk}^2 \\
            &\hspace{2cm} + g^{ii}g^{kk}g^{ll}(\lambda_k+\lambda_l)^2u_{ikl}^2\bigg) - \frac{|\nabla_gb|^2}{b} \\
            &= \frac{b}{2n} \bigg(-2g^{ii}g^{jj}g^{kk}(\lambda_i+\lambda_j)\lambda_k u_{ijk}^2 - 2g^{ii}g^{jj}g^{kk}(1+\lambda_j^2)u_{ijk}^2  \\
            &\hspace{2cm} + g^{ii}g^{kk}g^{ll}(\lambda_k+\lambda_l)^2u_{ikl}^2\bigg) - \frac{|\nabla_gb|^2}{b} \\
            &= \frac{b}{2n} \bigg(-4g^{ii}g^{jj}g^{kk}\lambda_i\lambda_k u_{ijk}^2 - 2g^{ii}g^{jj}g^{kk}\lambda_j^2u_{ijk}^2 - 2g^{ii}g^{jj}g^{kk}u_{ijk}^2 \\
            &\hspace{2cm} + 2g^{ii}g^{kk}g^{ll}\lambda_k^2u_{ikl}^2 + 2g^{ii}g^{kk}g^{ll}\lambda_k\lambda_lu_{ikl}^2\bigg) - \frac{|\nabla_gb|^2}{b} \\ 
            &= \frac{b}{2n} \bigg(-2g^{ii}g^{jj}g^{kk}\lambda_i\lambda_k u_{ijk}^2 - 2g^{ii}g^{jj}g^{kk}u_{ijk}^2 \bigg) - \frac{|\nabla_gb|^2}{b} . \label{Lb 2}
    \end{align}
    Finally, we use Cauchy-Schwarz to see that 
    \begin{align}
        \frac{|\nabla_gb|^2}{b} = \frac{b}{n^2}  g^{ii}\Big(g^{kk}\lambda_ku_{ikk}\Big)^2 \leq \frac{b}{n}  g^{ii}g^{kk}g^{kk}\lambda_k^2u_{ikk}^2,
    \end{align}
    which, by comparing with \eqref{Lb 2} and using the fact that $\lambda_k\geq0$ by convexity, concludes the poof of the lemma.
\end{proof}

\bigskip

\section{Oscillation and gradient bounds forward in time}

In this section, we will prove interior oscillation and gradient bounds forward in time for convex solutions of \eqref{ut2} defined on parabolic cylinders of the form
\[
\overline{B}_{R}(0)\times\lbrack0,\frac{1}{n}]\subset\mathbb{R}^{n}\times\mathbb{R}.%
\]
First, we prove an interior height bound forward in time for general solutions of \eqref{ut}.

\begin{proposition}
Suppose that $u$ is a solution of (\ref{ut}) on $\overline{B}_{R}(0)\times
\lbrack0,\frac{1}{n}]$. \ \ Then we have
\begin{equation}\label{int height bound}
u(0,\frac{1}{n})\leq\arctan\left(  \frac{\pi}{R^{2}}\right)  +\max
_{ \overline{B}_{R}(0) \times\left\{  0\right\}  }u\left(  x,0\right).
\end{equation}

\end{proposition}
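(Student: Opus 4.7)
The plan is to construct an explicit barrier that is itself a smooth solution of \eqref{ut}, and then apply the parabolic comparison principle on the cylinder $\overline B_R(0)\times[0,1/n]$. Set $M := \max_{\overline B_R(0)\times\{0\}} u(x,0)$ and define
\[
\tilde v(x,t) := M + \frac{\pi}{2R^2}|x|^2 + n\arctan\!\left(\frac{\pi}{R^2}\right) t.
\]
Then $D^2\tilde v = (\pi/R^2)\,I_n$, so all of its eigenvalues equal $\pi/R^2$, and $\partial_t\tilde v = n\arctan(\pi/R^2)$; consequently $\tilde v$ is an exact solution of \eqref{ut} on all of $\mathbb R^n\times\mathbb R$. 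Moreover $\tilde v(0,1/n) = M + \arctan(\pi/R^2)$, which is precisely the right-hand side of \eqref{int height bound}; thus once the comparison $u\le\tilde v$ is established on the cylinder, evaluating at $(0,1/n)$ will yield the proposition.

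The substantive step is the parabolic-boundary comparison. On the initial face $t=0$, one has $\tilde v(x,0) = M + \frac{\pi}{2R^2}|x|^2 \ge M \ge u(x,0)$ with no work. On the lateral face $|x|=R$, I would use the elementary pointwise bound $|u_t|\le n\pi/2$, which follows directly from \eqref{ut} together with $|\arctan\lambda_i|\le\pi/2$; integrating in $t\in[0,1/n]$ gives $u(x,t)\le u(x,0)+\pi/2 \le M+\pi/2$ whenever $|x|=R$. On the same lateral face $\tilde v(x,t) \ge M+\pi/2$ since both remaining terms are nonnegative, so $u\le \tilde v$ on the entire parabolic boundary.

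With the boundary inequality in place, set $w := u-\tilde v$ and linearize the smooth monotone operator $F(D^2u)=\sum_i \arctan\lambda_i$ along the segment between $D^2u$ and $D^2\tilde v$; this writes $w$ as a subsolution of a linear parabolic operator $\partial_t - a^{ij}(x,t)\partial_{ij}$ with positive-definite $a^{ij}$. The parabolic maximum principle then forces $w\le 0$ throughout $\overline B_R(0)\times[0,1/n]$, and evaluating at $(0,1/n)$ produces \eqref{int height bound}. I do not anticipate a genuine obstacle here; the only piece of calibration is the coefficient $\pi/(2R^2)$ in the spatial quadratic, chosen precisely so that $\tilde v$ gains exactly $\pi/2$ across $B_R(0)$, matching the crude lateral bound on $u$ supplied by $|u_t|\le n\pi/2$.
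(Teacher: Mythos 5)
Your proof is correct and follows essentially the same barrier-comparison strategy as the paper: both integrate \eqref{ut} in time to get the lateral boundary bound $u\le \max_{t=0}u + tn\pi/2$, then compare against a quadratic-in-$x$ barrier that evaluates to $\arctan(\pi/R^2)$ above the initial maximum at $(0,1/n)$. The only difference is cosmetic — your barrier $\tilde v=M+\frac{\pi}{2R^2}|x|^2+n\arctan(\pi/R^2)t$ is an exact solution of \eqref{ut}, whereas the paper uses the time-dependent supersolution $w=\frac{tn\pi}{2}(|x|/R)^2+nt\arctan(tn\pi/R^2)$.
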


\begin{proof} Up to subtracting a constant, we will assume that $\max u=0$ at $t=0.$ \ Notice that by simply integrating equation (\ref{ut}) at each point on $\partial B_{R}(0)$ we may obtain the bound
\begin{equation}
u(x,t)\leq\frac{tn\pi}{2}\text{ on }\partial B_{R}(0)\times\lbrack
0,\frac{1}{n}].\label{b1}%
\end{equation}

We will construct a supersolution to obtain the interior bound. Let
\[
w=\frac{tn\pi}{2}\left(  \frac{\left\vert x\right\vert }{R}\right)
^{2}+nt\arctan\left(  \frac{tn\pi}{R^{2}}\right),
\]
which satisfies
\[
w_{t}-\sum\arctan\lambda_{i}\geq0.
\]
By \eqref{b1}, $w \geq u$ on the boundary, and hence also in the
interior of $\overline{B}_{R}(0)\times\lbrack0,\frac{1}{n}]$. \ Thus
\begin{align*}
u(0,\frac{1}{n})  & \leq w(0,\frac{1}{n})\leq\arctan\left(  \frac{\pi}{R^{2}%
}\right) .
\end{align*}
\end{proof}

\begin{corollary}\label{coro}
Suppose for any $\Theta_0$ the function $u$ is a convex solution of (\ref{ut2}) on
\[
\overline{B}_{2R+1}(0)\times\lbrack0,\frac{1}{n}]\subset\mathbb{R}^{n}\times\mathbb{R}.%
\]
Then,
\[
\max_{\overline{B}_{1}(0)\times\lbrack0,\frac{1}{n}]}|Du|\leq\frac{1}{R}\left[
M+\arctan\left(  \frac{\pi}{  R^{2}}\right)  \right]
\]
where $M$ is the oscillation of $u$ at $t=0.$
\end{corollary}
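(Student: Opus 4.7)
The strategy is to reduce \eqref{ut2} to the pure flow \eqref{ut} (so that the preceding proposition applies directly), and then combine spatial convexity in $x$ with monotonicity in $t$ of the reduced flow to convert the desired gradient estimate into an oscillation bound controlled by $M+\arctan(\pi/R^2)$.

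First I would set $\tilde u(x,t):=u(x,t)+\Theta_0 t$. A direct computation gives $\tilde u_t=\sum\arctan\lambda_i$, so $\tilde u$ satisfies \eqref{ut}; moreover $\tilde u(\cdot,0)=u(\cdot,0)$ (so both have oscillation $M$ on $\overline B_{2R+1}(0)\times\{0\}$), $D\tilde u=Du$, $\tilde u(\cdot,t)$ is convex in $x$, and $\tilde u_t=\sum\arctan\lambda_i\geq 0$ since convexity forces $\lambda_i\geq 0$; thus $\tilde u$ is pointwise non-decreasing in $t$. For any $y\in \overline B_{R+1}(0)$ the inclusion $\overline B_R(y)\subset \overline B_{2R+1}(0)$ allows a translated application of the preceding proposition to $\tilde u$ at center $y$. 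The supersolution $w$ from its proof satisfies $w(0,s)=ns\arctan(sn\pi/R^2)$, which is increasing on $[0,1/n]$ and equals $\arctan(\pi/R^2)$ at $s=1/n$; hence the comparison $\tilde u\leq w+\max \tilde u(\cdot,0)$ extends forward in time and gives
\[
\tilde u(y,t)\;\leq\;\arctan\!\Bigl(\frac{\pi}{R^{2}}\Bigr)+\max_{\overline B_{2R+1}(0)\times\{0\}} u
\]
uniformly in $y\in \overline B_{R+1}(0)$ and $t\in[0,1/n]$. On the other side, the monotonicity $\tilde u(y,t)\geq \tilde u(y,0)=u(y,0)$ gives $\min_{\overline B_R(x_0)}\tilde u(\cdot,t)\geq \min_{\overline B_{2R+1}(0)\times\{0\}} u$ for every $x_0\in\overline B_1(0)$, so $\mathrm{osc}_{\overline B_R(x_0)}\tilde u(\cdot,t)\leq M+\arctan(\pi/R^{2})$.

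Finally, I would invoke the classical convex-function inequality $|Df(x_0)|\leq R^{-1}\mathrm{osc}_{\overline B_R(x_0)}f$, which is a one-line consequence of $f(x_0+Re)\geq f(x_0)+R\,Df(x_0)\cdot e$ specialized to $e=Df(x_0)/|Df(x_0)|$. Applied to $f=\tilde u(\cdot,t)$ (convex and defined on a neighborhood of $\overline B_R(x_0)$ since $\overline B_R(x_0)\subset \overline B_{R+1}(0)\subset \overline B_{2R+1}(0)$), together with $D\tilde u=Du$, this yields the claimed bound at every $(x_0,t)\in\overline B_1(0)\times[0,1/n]$. The only real technical point is extending the preceding height estimate from the single time $t=1/n$ to all $t\in[0,1/n]$; this is essentially free because $w(0,\cdot)$ is monotone, but it does need to be observed explicitly.
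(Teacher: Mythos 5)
Your proof is correct and follows essentially the same line as the paper's: reduce \eqref{ut2} to \eqref{ut} by adding $\Theta_0 t$, apply the translated height bound for the upper estimate, use monotonicity of the flow ($\tilde u_t=\sum\arctan\lambda_i\geq0$) for the lower estimate, and then invoke the convex gradient-versus-oscillation inequality. You are slightly more explicit than the paper about why the height bound extends from the endpoint $t=1/n$ to all $t\in[0,1/n]$ (via monotonicity of $w(0,\cdot)$), a point the paper leaves implicit, but this is a refinement of the same argument rather than a different route.
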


\begin{proof}
By subtracting a constant, we will assume that $\max_{\overline{B}_{2R+1}%
(0)\times\lbrack0,T)}u=0$, so that   $\min_{\overline{B}_{2R+1}%
(0)\times\lbrack0,T)}u = -M.$ Adding $\Theta_0 t$, we may also assume that $u$ is a solution of (\ref{ut}). From \eqref{int height bound}, we have 
\[
u(x,t)\leq\arctan\left(  \frac{\pi}{  R^{2}}\right)  \text{
on }B_{R+1}(0)\times\lbrack0,\frac{1}{n}].
\]

Since $u$ is convex, we know $\Theta\geq0$, and hence $u$ can only increase along the flow of type (\ref{ut}). Therefore, we see that the oscillation is no more than $M+\arctan\left(  \frac{\pi}{  R^{2}}\right)  $ on $B_{R+1}(0)\times\lbrack0,\frac{1}{n}].$ \ The conclusion follows by applying the convexity condition to any point on $B_{1}(0)$, noting that the derivative bound from convexity depends only on the oscillation of each time slice. However, since solutions of (\ref{ut}) and (\ref{ut2}) only differ by a constant in each time slice, we also obtain the conclusion for solutions of (\ref{ut2}). 
\end{proof}

\section{Hessian bound}

In this section, we prove an interior Hessian bound forward in time for convex solutions of \eqref{ut} and \eqref{ut2}. The first proposition applies to convex solutions with sufficiently small gradient $|Du|$, while the second applies to convex solutions with bounded oscillation.

\begin{proposition}\label{Hb}
Let $u$ be a convex solution of \eqref{ut} or \eqref{ut2} on $B_{1}(0)\times\lbrack0,\frac{1}{n}]$. Suppose that for some $\alpha,\gamma >0$ we have
\[
\alpha> \frac{3n}{2},
\]
\[
\alpha\gamma<1, 
\]
and
\[
\left\vert Du\right\vert ^{2}<\gamma\text{ on }B_{1}(0)\times\lbrack0,\frac{1}{n}].
\]
Then for any $K>0$%
\[
\lambda_{\max}^{2}(0,\frac{1}{n})\leq\frac{e^{2nK}\left(  \frac{2\alpha}{2\alpha-
3n   }\right)  ^{n}}{\left(  e^{K\left(  1-\alpha
\gamma\right)  }-1\right)  ^{2n}}.
\]

\end{proposition}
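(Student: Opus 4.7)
The strategy is to adapt Korevaar's pointwise maximum-principle argument to this parabolic setting, using the Jacobi inequality $Lb + 2|\nabla_g b|^2/b \leq 0$ from Lemma 2.1. Since $b^{2n} = \det g = \prod_i(1+\lambda_i^2) \geq \lambda_{\max}^2$, it suffices to establish an upper bound on $b(0,\tfrac{1}{n})$.

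I would construct an auxiliary test function of the form
\[
P(x,t) = \bigl(e^{nKt(1 - \alpha |Du|^2)} - 1\bigr)^{\mu_1}\, \eta(x)^{\mu_2}\, b(x,t)^{\mu_3}
\]
on $\overline{B}_1(0) \times [0,\tfrac{1}{n}]$, where $\eta$ is a nonnegative spatial cutoff vanishing on $\partial B_1(0)$ with $\eta(0) = 1$, and $\mu_1,\mu_2,\mu_3 > 0$ are exponents to be pinned down. The hypothesis $\alpha|Du|^2 \leq \alpha\gamma < 1$ makes the temporal-gradient barrier nonnegative throughout the cylinder, and it vanishes at $t = 0$; combined with $\eta = 0$ on $\partial B_1(0)$, we see that $P$ vanishes on the parabolic boundary. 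Therefore $P$ attains its maximum at some interior point $(x_0,t_0)$ with $t_0 > 0$, where the parabolic maximum-principle conditions $\nabla P = 0$, $\partial_t P \geq 0$, $D^2 P \leq 0$ force $LP \geq 0$.

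Next I would compute $LP$ using Leibniz, substituting three key ingredients: the Jacobi inequality, the identity $L(|Du|^2) = -2 g^{ij} u_{ik} u_{jk}$ obtained by differentiating \eqref{Lu_k}, and the direct derivatives of $\eta$. The critical-point identity $\nabla P(x_0,t_0) = 0$ eliminates $\nabla_g b/b$ in favor of $\nabla_g(e^Q - 1)/(e^Q - 1)$ and $\nabla_g \eta/\eta$ (where $Q = nKt(1-\alpha|Du|^2)$); applying Cauchy-Schwarz with an optimal parameter $\epsilon$ absorbs the resulting cross-terms into the ``good'' Jacobi contribution $-|\nabla_g b|^2/b^2$. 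The resulting pointwise algebraic inequality then bounds $b^{\mu_3}(x_0,t_0)$, and hence $\max P$, in terms of $\alpha, n, K$ alone. Evaluating at $(0,\tfrac{1}{n})$ using $\eta(0) = 1$ and $e^{K(1 - \alpha|Du(0,1/n)|^2)} - 1 \geq e^{K(1-\alpha\gamma)} - 1$ then yields the stated bound on $\lambda_{\max}^2(0,\tfrac{1}{n})$.

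The main obstacle is selecting the exponents $\mu_1, \mu_2, \mu_3$ and the Cauchy-Schwarz parameter so that the algebra collapses cleanly into the precise displayed bound. The threshold $\alpha > 3n/2$ is expected to emerge as the condition that, after absorbing the $|\nabla_g|Du|^2|^2$ cross-term against the Jacobi good term, the remaining coefficient has the right sign, and the factor $\tfrac{2\alpha}{2\alpha - 3n}$ should appear as the reciprocal of the optimal absorption coefficient. Additional bookkeeping is required to ensure the spatial-cutoff contributions vanish cleanly at $x = 0$ without polluting the dependence on $\alpha, \gamma, K, n$.
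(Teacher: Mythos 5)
Your high-level strategy (Korevaar-style maximum principle on a product of the volume element $b$ with an exponential cutoff in $|Du|^2$, $x$, and $t$, relying on the Jacobi inequality) is the same as the paper's, and your bookkeeping of the final evaluation at $(0,\tfrac1n)$ is the right kind of step. But the specific test function you wrote down has a sign error in its gradient barrier that is fatal to the argument.

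You propose the factor $e^{nKt(1-\alpha|Du|^2)}-1$, i.e.\ the exponent contains $-\alpha|Du|^2$. Using the identity you correctly state, $L(|Du|^2)=-2g^{ij}u_{ik}u_{jk}\le 0$, this gives
\[
L\bigl(nKt(1-\alpha|Du|^2)\bigr) = nK(1-\alpha|Du|^2) + 2nKt\,\alpha\, g^{ij}u_{ik}u_{jk}
= nK(1-\alpha|Du|^2) + 2nKt\,\alpha\sum_i\frac{\lambda_i^2}{1+\lambda_i^2}
\]
after diagonalizing. Both terms are nonnegative (the first because $\alpha|Du|^2<\alpha\gamma<1$). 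At a positive interior maximum of your test function $P$ one only knows $LP\ge 0$; to extract a Hessian bound one needs a term of the form $-c\sum_i\frac{\lambda_i^2}{1+\lambda_i^2}$ with $c>0$ on the right of the resulting inequality, and it cannot be supplied by the Jacobi quantity $-|\nabla_g b|^2/b^2$ (that is a term in \emph{third} derivatives of $u$ with no a priori lower bound in $\lambda_{\max}$; its role is only to make the $b$-contribution to $LP$ nonpositive so it can be discarded). With your sign choice, the gradient barrier contributes $+2nKt\alpha\sum\frac{\lambda_i^2}{1+\lambda_i^2}$, which is the \emph{wrong} sign, and the maximum-principle inequality $LP\ge 0$ is then vacuously consistent for all $\lambda_{\max}$ — no bound is obtained. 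No choice of exponents $\mu_1,\mu_2,\mu_3$ or Cauchy--Schwarz parameter repairs this, because the sign of the Hessian-quadratic term is fixed by the sign of $\alpha|Du|^2$ in the exponent.

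The paper's barrier is $\phi=\bigl[\alpha|Du|^2-\alpha\gamma+nt(1-|x|^2)\bigr]^+$, crucially with $+\alpha|Du|^2$ and with the spatial cutoff bundled into the same $\phi$ rather than kept as a separate factor. Then $L(\alpha|Du|^2)=-2\alpha g^{ij}u_{ik}u_{jk}$ supplies the needed negative term $-2\alpha\sum\frac{\lambda_i^2}{1+\lambda_i^2}$, while the additive constant $-\alpha\gamma$ (unmultiplied by $t$ or $x$) plus the positive part $[\,\cdot\,]^+$ make $\phi$ vanish at $t=0$ and on $\partial B_1$, and the term $nt(1-|x|^2)$ lifts $\phi(0,\tfrac1n)\ge 1-\alpha\gamma>0$ for the final evaluation. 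With $h=\bigl(e^{K\phi}-1\bigr)b$, spatial maximality gives $\eta_i=-\eta b_i/b$, so $Lh$ splits as $(\eta_t-g^{ij}\eta_{ij})b + \eta(Lb+2|\nabla_g b|^2/b)$; Jacobi kills the second term, and from $\eta_t-g^{ij}\eta_{ij}\ge 0$ one reads off $2\alpha\sum\frac{\lambda_i^2}{1+\lambda_i^2}\le 3n$ directly, giving $b(x_0,t_0)\le\bigl(\tfrac{2\alpha}{2\alpha-3n}\bigr)^{1/2}$ with no cross-term absorption needed. You should flip the sign in your gradient barrier and fold the spatial cutoff into a single $\phi$ so that the only inequality to analyze at the max point is $L\phi\ge 0$.
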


\begin{proof}
Let
\begin{align*}
b &  =V^{1/n}\\
\phi &  =\left[  \alpha\left\vert Du\right\vert ^{2}-\alpha\gamma+nt
(1-\left\vert x\right\vert ^{2})\right]  ^{+}\\
f &  =e^{K\phi}-1\\
\eta &  =f\circ\phi,
\end{align*}
and consider the function
\[
h=\eta b.
\]
Notice that
\[
-\alpha\gamma\leq\alpha\left\vert Du\right\vert ^{2}-\alpha\gamma\leq0,
\]
so $\phi$ is 0 when $t=0$ (and thus so is $\eta$). Additionally, $\phi$ vanishes on the boundary $\partial B_1(0)$ for any time. \ 

Now, since $-\alpha\gamma+1>0$, the function $\eta$ will be positive somewhere, and
we let $\left(  x_{0},t_{0}\right)$ be a point where a positive maximum for
$h$ occurs on $B_{1}(0)\times\lbrack0,\frac{1}{n}]$. \ \ Then we must have
\[
Lh=h_{t}-g^{ij}h_{ij}\geq0.
\]
From spatial maximality, we obtain
\[
\eta_{i}=-\frac{\eta b_{i}}{b},
\]
which implies
\begin{align*}
Lh  & =\eta_{t}b+\eta b_{t}-g^{ij}\eta_{ij}b-2g^{ij}\eta_{i}b_{j}-\eta
g^{ij}b_{ij}\\
& =\left(  \eta_{t}-g^{ij}\eta_{ij}\right)  b+\eta\left(  b_{t}+2g^{ij}%
\frac{b_{i}}{b}b_{j}-g^{ij}b_{ij}\right)  .
\end{align*}
By \eqref{jacobi ineq}, we know that
\[
\left(  b_{t}+2g^{ij}\frac{b_{i}}{b}b_{j}-g^{ij}b_{ij}\right)  \leq0,
\]
so we conclude at $\left(  x_{0},t_{0}\right)  $ that 
\[
\eta_{t}-g^{ij}\eta_{ij}\geq0.
\]
We compute
\begin{align*}
\eta_{i} &  =Ke^{K\phi}\left(  2\alpha\sum_{k=1}^{n}u_{k}u_{ki}-2 n t x_{i}\right)  \\
\eta_{ij} &  =Ke^{K\phi}\left(  2\alpha\sum_{k=1}^{n}u_{kj}u_{ki}+2\alpha
\sum_{k=1}^{n}u_{kij}u_{k}-2 n t\delta_{ij}\right)  \\
& \hspace{.5cm} +K^{2}e^{K\phi}\left(  2\alpha\sum_{k=1}^{n}u_{k}u_{ki}-2 n t 
x_{i}\right)  \left(  2\alpha\sum_{k=1}^{n}u_{k}u_{kj}-2 n t 
x_{j}\right)  \\
\eta_{t} &  =Ke^{K\phi}\left(  2\alpha\sum_{k=1}^{n}u_{k}u_{kt}+n
\left(  1-\left\vert x\right\vert ^{2}\right)  \right),
\end{align*}
so at $\left(x_{0},t_{0}\right)$ we have
\begin{align*}
0   \leq\frac{\eta_{t}-g^{ij}\eta_{ij}}{Ke^{K\phi}} 
&=\left(  2\alpha\sum
_{k=1}^{n}u_{k}u_{kt}+n \left(  1-\left\vert x\right\vert
^{2}\right)  \right)  \\
& \hspace{.75cm} -g^{ij}\left(  2\alpha\sum_{k=1}^{n}u_{kj}u_{ki}+2\alpha\sum_{k=1}%
^{n}u_{kij}u_{k}-2 nt\delta_{ij} \right)  \\
& \hspace{.75cm} -g^{ij}K\left(  2\alpha\sum_{k=1}^{n}u_{k}u_{ki}-2 n t x_{i}\right)
\left(  2\alpha\sum_{k=1}^{n}u_{k}u_{kj}-2 n t x_{j}\right)  .
\end{align*}
Using \eqref{Lu_k},
discarding the final term, and diagonalizing the Hessian (which will change the direction of the gradient at $(x_{0},t_{0})$ but not its magnitude) we obtain
\begin{align*}
0 &  \leq n \left(  1-\left\vert x\right\vert ^{2}\right)
-2\alpha\sum\frac{\lambda_{i}^{2}}{1+\lambda_{i}^{2}}+2 n t\sum\frac
{1}{1+\lambda_{i}^{2}},
\end{align*}
which can be written as
\[
2\alpha\sum\frac{\lambda_{i}^{2}}{1+\lambda_{i}^{2}}\leq\ 3n.
\]
Therefore, we conclude at $\left(  x_{0},t_{0}\right)$ that any eigenvalue $\lambda_i$ satisfies the bound
\[
\frac{\lambda_{i}^{2}}{1+\lambda_{i}^{2}}\leq\frac{3n}{2\alpha},%
\]
which is equivalent to 
\[
\lambda_{i}^{2}\leq\frac{3n}{2\alpha- 3n}.
\]
Thus,
\begin{align*}
V\left(  x_{0},t_{0}\right)   &  =\prod\left(  1+\lambda_{i}^{2}\right)
^{1/2} \\ &\leq\left[  1+\left( \frac{3n}{2\alpha- 3n}\right)  \right]  ^{n/2}\\
&  =\left(  \frac{2\alpha}{2\alpha-3n }\right)
^{n/2},
\end{align*}
and so
\[
b\left(  x_{0},t_{0}\right)  \leq \left(  \frac{2\alpha}{2\alpha-3n }\right)
^{1/2}.
\]

Now, for all $\left(  x,t\right)$ we have $\phi \leq 1$ 
and hence%
\[
h\left(  x_{0},t_{0}\right)  \leq e^{K} 
\left(  \frac{2\alpha}{2\alpha-3n }\right)
^{1/2},
\]
from which it follows that
\[
h(0,T)\leq e^{K}
\left(  \frac{2\alpha}{2\alpha-3n }\right)
^{1/2}.
\]
Finally, observe that
\[
\phi(0,\frac{1}{n})\geq-\alpha\gamma+1,
\]
which yields
\[
b(0,\frac{1}{n})\leq\frac{e^{K}
\left(  \frac{2\alpha}{2\alpha-3n }\right)
^{1/2}}{e^{K\left(  1-\alpha\gamma\right)  }-1}.
\]
Therefore, the maximum eigenvalue $\lambda_{\max}$ satisfies
\[
1+\lambda_{\max}^{2}\leq\frac{e^{2nK}\left(  \frac{2\alpha}{2\alpha-3n }\right)  ^{n}}{\left(  e^{K\left(  1-\alpha
\gamma\right)  }-1\right)  ^{2n}}.%
\]
\end{proof}

We now combine Proposition \ref{Hb} with Corollary \ref{coro} to obtain a Hessian bound from the oscillation. \

\begin{proposition}\label{main}
Suppose that $u$ is a convex solution of (\ref{ut2})
on $B_{2R+1}(0)\times\lbrack0,\frac{1}{n}]$ with%
\[
R=3\sqrt{n}
\]
and $u$ satisfies
\[
-1\leq u\leq1
\]
on $B_{2R+1}\times\left\{  0\right\}  $. \ \ Then
\[
D^{2}u(0,\frac{1}{n})\leq C(n).
\]

\end{proposition}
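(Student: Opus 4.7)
The plan is to combine Corollary \ref{coro} with Proposition \ref{Hb}: the former reduces the hypothesis ``bounded oscillation'' to ``small gradient,'' the latter then converts ``small gradient'' into a pointwise Hessian bound. First I would apply Corollary \ref{coro} with oscillation $M = 2$ (from $-1 \leq u \leq 1$ at $t = 0$) and $R = 3\sqrt{n}$, which immediately yields
\[
|Du| \leq \frac{1}{3\sqrt{n}}\left[2 + \arctan\!\left(\frac{\pi}{9n}\right)\right] \quad \text{on } B_1(0) \times [0, 1/n].
\]
Squaring this and using the elementary inequality $(2 + \arctan(\pi/9))^2 < 6$ (noting that $\arctan(\pi/(9n))$ decreases in $n$, so the $n=1$ case is the worst) gives
\[
|Du|^2 < \frac{6}{9n} = \frac{2}{3n} \quad \text{on } B_1(0) \times [0, 1/n],
\]
with strict inequality uniformly in $n \geq 1$.

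Next I would invoke Proposition \ref{Hb}. Since $\tfrac{3n}{2} \cdot \tfrac{2}{3n} = 1$ on the nose, the strict inequality above leaves exactly enough slack to pick $\alpha$ just above $\tfrac{3n}{2}$ and $\gamma$ just above the supremum of $|Du|^2$ so that simultaneously $\alpha > \tfrac{3n}{2}$, $\alpha\gamma < 1$, and $|Du|^2 < \gamma$. With these choices and any fixed $K > 0$ (say $K = 1$), Proposition \ref{Hb} gives
\[
\lambda_{\max}^2(0, 1/n) \leq \frac{e^{2nK}\left(\tfrac{2\alpha}{2\alpha - 3n}\right)^{n}}{\left(e^{K(1 - \alpha\gamma)} - 1\right)^{2n}} = C(n).
\]
Convexity of $u$ forces all eigenvalues of $D^2 u$ to lie in $[0, \lambda_{\max}]$, so $D^2 u(0, 1/n) \leq C(n)$ as required.

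The main (and essentially only) obstacle is the quantitative matching of the two estimates: the smallness threshold $|Du|^2 < \tfrac{2}{3n}$ demanded by Proposition \ref{Hb} must be met by the gradient bound furnished by Corollary \ref{coro}. This is exactly the role played by the specific choice $R = 3\sqrt{n}$ in the hypotheses — it is calibrated so that the leading behavior $|Du|^2 \sim 4/(9n)$ sits strictly below $6/(9n) = 2/(3n)$, with enough margin that the $\arctan(\pi/(9n))$ correction does not destroy the strict inequality for any $n \geq 1$.
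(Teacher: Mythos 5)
Your proposal is correct and follows essentially the same route as the paper: apply Corollary \ref{coro} to get the gradient bound on $B_1(0)\times[0,1/n]$, then verify the hypotheses of Proposition \ref{Hb} and apply it. The only difference is that you argue existentially that suitable $\alpha,\gamma$ can be chosen since $\sup|Du|^2$ sits strictly below $2/(3n)$ with a uniform margin, whereas the paper simply makes the explicit numerical choices $\gamma=\frac{1}{9n}\bigl(2+\arctan(\pi/(9n))\bigr)^2<0.61/n$, $\alpha=1.6n$, $K=1$; both are valid instantiations of the same argument.
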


\begin{proof}
By Corollary \ref{coro}, we have the bound
\[
\max_{B_{1}(0)\times\lbrack0,\frac{1}{n}]}|Du| \leq\frac{1}{3\sqrt{n}}\left[
2+\arctan\left(  \frac{\pi}{9n}\right)  \right].
\]
We now apply the Hessian bound of Proposition \ref{Hb} with
\begin{align*}
T &  =\frac{1}{n}\\
\gamma &  =\frac{1}{9n}\left(  2+\arctan\left(  \frac{\pi}{9n}\right)
\right)  ^{2}<\frac{0.61}{n}\\
\alpha &  =1.6n\\
K &  =1
\end{align*}
to conclude
\[
\lambda_{\max}^{2}(0,\frac{1}{n})\leq\frac{e\left(  16\right)  ^{n}}{\left(
e^{\left(  1-\alpha\gamma\right)  }-1\right)  ^{2n}}=C(n).
\]

\end{proof}

\section{Proof of the Liouville Theorem}

To prove our main theorem we will show that for any $\left(  x_{0},t_{0}\right)$ we have
$D^{2}u(x_0,t_0)\leq C(n).$ \ Without loss of generality we may choose $t_{0}=0.$  For
fixed $x_{0}$, consider
\[
\tilde{u}_{\lambda}(x,t)=\frac{1}{\lambda^{2}}u\left(  \lambda(x-x_{0}%
),\lambda^{2}t\right),
\]
which restricts to a solution of (\ref{ut}) on $B_{6\sqrt{n}+1}%
(0)\times\lbrack-\frac{1}{n},0]$ with
\begin{equation}\label{tilde{u} vs u}
\sup_{x\in B_{6\sqrt{n}+1}(0)}\left\vert \tilde{u}_{\lambda}(x,-\frac{1}%
{n})\right\vert \leq\frac{1}{\lambda^{2}}\sup_{z\in B_{\left(  6\sqrt
{n}+1\right)  \left(  \lambda+\left\vert x_{0}\right\vert \right)  }%
(0)}\left\vert u\left(  z,-\lambda^{2}\frac{1}{n}\right)  \right\vert .
\end{equation}
By the growth at antiquity condition, for $\lambda$ large enough
and $\varepsilon_{0}<1$ small enough  we have
\begin{align*}
\sup_{z\in B_{\left(  6\sqrt{n}+1\right)  \left(  \lambda+\left\vert
x_{0}\right\vert \right)  }(0)}\left\vert u\left(  z,-\lambda^{2}\frac{1}%
{n}\right)  \right\vert  & \leq\frac{1}{\left(  6\sqrt{n}+2-\varepsilon
_{0}\right)  ^{2}}\left(  \left\vert \left(  6\sqrt{n}+1\right)  ^{2}\left(
\lambda+\left\vert x_{0}\right\vert \right)  \right\vert ^{2}+R_{0}\right)
\\
& \leq\frac{1}{1+\varepsilon_{1}}\lambda^{2}\left[  \left(  1+\frac{\left\vert
x_{0}\right\vert }{\lambda}\right)  ^{2}+\frac{R_{0}}{\lambda^{2}}\right] ,
\end{align*}
so, by choosing $\lambda>>\left\vert x_{0}\right\vert ,R_{0}$, we obtain from \eqref{tilde{u} vs u} %
\[
\sup_{x\in B_{6\sqrt{n}+1}(0)}\left\vert \tilde{u}_{\lambda}(x,-\frac{1}%
{n})\right\vert \leq1.
\]
We finally apply Proposition \ref{main} to conclude%
\[
D^{2}u(x_{0},0)=D^{2}u_{\lambda}(0,0)\leq C(n).
\]
Since this bound holds at any arbitrary point, it holds
everywhere, and we may apply \cite[Proposition 2.1]{YYmcf}, which states that any
convex ancient entire solution with a uniform Hessian bound must be a quadratic polynomial. This completes the proof.

\bibliography{Library}
\bibliographystyle{amsalpha}

\end{document}